\newtheorem{theorem}{Theorem}[section]
\newtheorem{lemma}{Lemma}[section]
\newtheorem{definition}{Definition}[section]
\newtheorem{example}{Example}[section]
\newtheorem{remark}{Remark}[section]
\def\dint{\displaystyle \int}
\newcommand{\proof}{{\bf Proof:} }
\newcommand{\eop}{ \hfill $\Box$ }
\begin{document}
\begin{center}
{\Large Existence of optimal controls for SPDE with locally monotone coefficients} 
\footnote{Supported by CAPES Grant 480356/2010-6.} 
\\
\end{center} 

\vspace{0.3cm}

\begin{center}
{\large  Edson A. Coayla-Teran} \\
\textit{Universidade Federal da Bahia-UFBA\\
Av. Ademar de Barros s/n, Instituto de Matem\' atica, Salvador, BA, Brasil, CEP 40170-110, 
e-mail: coayla@ufba.br}\\
{\large  Paulo M. Dias de Magalh\~aes}\\
\textit{Departamento de
Matem\'atica, Universidade Federal de Ouro Preto,\\
Ouro Preto, MG, Brazil,
CEP 35400-000
pmdm@iceb.ufop.br}\\
{\large  Jorge Ferreira}\\
\textit{Departamento de Ci\^encias Exatas, Universidade Federal Fluminense,\\
Av. dos Trabalhadores 420, Vila Santa Cec\'ilia, Volta Redonda, RJ, Brazil, CEP 27255-125\\
ferreirajorge2012@gmail.com}
\end{center}
\begin{abstract}
The aim of this paper is to investigate the existence of optimal controls for systems described by 
stochastic partial differential equations (SPDEs) with locally monotone coefficients controlled by different external forces which are feedback controls. To attain our objective we adapt the argument of \cite{L} where the existence of optimal control to the stochastic Navier-Stokes equation was studied. The results obtained in the present paper may be applied to demonstrate the existence of optimal control to  various types of controlled SPDEs  such as: a stochastic nonlocal equation and stochastic semilinear equations which are locally monotone equations;  we also apply the result to a monotone equation  such as the stochastic reaction diffusion equation  and to a stochastic linear equation.
\end{abstract}
{\bf Keywords}:{Stochastic optimal control; Stochastic partial differential equation.}\\
{\bf AMS Subject Classification 2010}: 93E20; 60H15.

\section{Introduction}	
Let $H$ be a real separable Hilbert space.  Let $V$ be a reflexive Banach space. Identify $H$ with its dual $H'$ and denote the dual of $V$ by $V'$. Let
$$
V\subset H\cong H'\subset V'
$$
where the inclusions are assumed to be dense and compact. The triad $(H,V,V')$ is known as a \textit{Gelfand triple}. We will denote by $\|\cdot\|_V,$ $\|\cdot\|,$ $\|\cdot\|_{V'}$ the norms in $V,$ $H,$ and $V'$ respectively. The inner product in $H$ and the duality scalar product between $V$ and $V'$ will be denoted by $(\cdot,\cdot)$ and $\langle\cdot,\cdot\rangle$ respectively.\\
Let $\left\{W_t\right\}_{t\geq0}$ be a cylindrical Wiener process on a separable Hilbert space $U$ w.r.t. a complete filtered probability space $(\Omega,\mathcal{F},\mathcal{F}_t,\mathbb{P})$ and $(L_2(U;H),\|\cdot\|_2)$ denotes the space of all Hilbert-Schmidt operators from $U$ to $H.$\\
Let $T>0$ be some fixed time. Consider the following initial value problem involving a controlled SPDE of the form: 
\begin{equation}\label{coeq1}
du(t)=(A(t,u(t),u(t))+\Phi(t,u(t)))dt+\Xi(t,u(t))dW(t),\,  u(0)=u_0
\end{equation}
where $A:[0,T]\times V\times V\times\Omega\rightarrow V',$ $\Phi:[0,T]\times H\times\Omega\rightarrow H$ and $\Xi: [0,T]\times V\times\Omega\rightarrow L_2(U;H)$ are progressively measurable, $A$ satisfies a locally monotone condition (see condition {\bf A2} below) and $\Phi$ is a control.\\
In this paper we will study the existence of an optimal control which minimizes the cost function $\mathcal{J}(\Phi)$ with $\Phi$ belonging to $\mathcal{U},$ the set of controls associated with the controlled initial value problem (\ref{coeq1}).\\
\indent The problem of the existence of an optimal control for SPDEs is an important question in 
optimal control theory and often resolved by assuming that the set of admissible controls is 
compact and by using the Main Theorem for Minimum Problems (see \cite{ZE}, Theorem 38.B ).  
In order to answer this question, we use a weaker condition to the set of admissible controls 
which is weak sequentially compact and similarly with the Theorem 38.A of Zeidler \cite{ZE}, we 
assume that the functional cost is weak sequentially lower semicontinuous. The problem of the 
existence of an optimal control for SPDEs  has been studied by several authors, for example, 
by Nagase \cite{N}, Buckdahn and R\u a\c scanu \cite{BR}, Gatarek and Sobczyk \cite{GSS}, 
Guisepina and Federica \cite{GF}, and Al-Hussein \cite{AH} but the results of  these papers 
cannot be applied in the study of the equation in (\ref{coeq1}) because they assume semilinearity
or boundedness for the nonlinearities. As we mentioned previously, the existence of optimal controls for the stochastic Navier - Stokes equation was studied in \cite{L} and we follow the same idea to demonstrate the existence of optimal control to other SPDEs that satisfies a local monotonicity condition. The argument is  to prove that a minimizing sequence has a subsequence which converges weakly (see Lemma \ref{mainlem} ). Then, we prove that weak convergence implies strong convergence of a subsequence of the corresponding solutions, see Theorems \ref{strongconv00} and \ref{maintheo}, these theorems were adapted from \cite{L} to the case of SPDEs with locally monotone coefficients and allow to demonstrate the existence of optimal control to a wide class of SPDEs with locally monotone coefficients as we will see in the examples section. We want to remark that the main result, the Theorem \ref{theomain} of the present work, may also be applied to demonstrate  the existence  of optimal control to locally monotone SPDEs which until the present moment were not studied by other authors.  Specifically, the Examples 3.2 and 3.3 demonstrating the existence of optimal control are new in the literature. \\
\indent The article is organized in the following way: in Section 1, we present the  basic spaces, the norms, properties and notations which we are going to work with in the subsequent sections. In section 2, we formulate the control problem, which is the goal of this work and we prove the existence of an optimal control. Finally, in Section 3 we provide examples where the result of the present paper is applied to some SPEDs such as a nonlocal equation,  semilinear  equation and to other type of SPDEs such as a linear equation and to the stochastic reaction diffusion equation which is a monotone equation.\\
To simplify notation, we use the letter $\mathbb{T}$ for the interval $[0,T]$. Let $(\Omega, \mathcal{F},\mathbb{P})$ be a complete probability space, $\left(\mathcal{F}_t\right)_{t\in\mathbb{T}}$ a right-continuous filtration such that $\mathcal{F}_0$ contains all $\mathcal{F}-$null sets and let $\mathbb{E}(X)$ denote the mathematical expectation of the random variable $X.$ We abbreviate ``\textit{almost surely} $\omega\in\Omega.$" to  a.s.\\
\indent Let $B$ be a Banach space with norm $\|\cdot\|_B$ and let $\mathcal{B}(B)$ denote the Borel $\sigma-$algebra of $B$. The space $L^2(\Omega\times\mathbb{T};B)$ is the set of all $\mathcal{F}\otimes\mathcal{B}(\mathbb{T})-$measurable processes $u:\Omega\times\mathbb{T}\rightarrow B$ which are $\mathcal{F}_t-$ adapted and $\mathbb{E}(\int_{\mathbb{T}}\|u\|^2_Bdt)<\infty.$ The constant $c_{HV}$ is such that $\|v\|^2\leq c_{HV}\|v\|^2_V$ for all $v\in V$.\\
In order to get solutions to (\ref{coeq1}), we state the following conditions on the coefficients:
Suppose there exist constants $\alpha>1,$ $\beta\geq0,$ $\theta>0,$ $K>0$ and a positive adapted process $f\in L^1([0,T]\times\Omega;\mathbb{R})$ such that the following conditions hold for all $v,$ $v_1,$ $v_2\in V$ and a.e. $(t,\omega)\in\mathbb{T}\times\Omega.$
\begin{enumerate}
\item[{\bf A1})]  (Hemicontinuity) The map $s\rightarrow\langle A(t,v_1+sv_2,v_1+sv_2),v\rangle+\langle \Phi(t,v_1+sv_2),v\rangle$ is continuous on $\mathbb{R}.$
\item[{\bf A2})] (Local monotonicity)
$$\begin{array}{rl}
2\langle A(t,v_1,v_1)-A(t,v_2,v_2),v_1-v_2\rangle+&2\langle \Phi(t,v_1)-\Phi(t,v_2),v_1-v_2\rangle+\\
+\|\Xi(t,v_1)-\Xi(t,v_2)\|_2^2\leq&(K+\rho(v_2))\|v_1-v_2\|^2,
\end{array}
$$
where $\rho: V\rightarrow[0,+\infty)$ is a mensurable function and locally bounded in $V.$
\item[{\bf A3})] (Coercivity) 
$$
2\langle A(t,v_1,v_1),v_1\rangle+2\langle \Phi(t,v_1),v_1\rangle+\|\Xi(t,v_1)\|_2^2+\leq-\theta\|v_1\|_V^2+K\|v\|^2+f(t).
$$
\item[{\bf A4})] (Growth) 
$$
\|A(t,v_1,v_1)\|^2_{V'}+\|\Phi(t,v_1)\|^2_{V'}\leq (f(t)+K\|v_1||^2_V)(1+\|v_1\|^{\beta}).
$$
\end{enumerate}
In this work, we understand that the stochastic process $u_{\Phi}$ is a solution to the problem in (\ref{coeq1}) in the following sense.
\begin{definition} Let $u_0$ be a  random variable which does not depend on $W(t).$ The stochastic process $(u_{\Phi}(t))_{t\in\mathbb{T}}\in L^2(\Omega\times\mathbb{T};V),$ $\mathcal{F}_t-$ adapted, with a.s. sample paths continuous in $H$, is a solution to (\ref{coeq1}) if it satisfies the equation:
\begin{equation}\label{iska}\begin{array}{rl}
(u_{\Phi}(t),v)\!=&(u_0,v)+\!\dint_0^t\left\langle A(u_{\Phi}(s),v\right\rangle ds+\dint_0^t(\Phi(s,u_{\Phi}(s)),v)ds+\\
&+\dint_0^t(v,(\Xi(s,u_{\Phi}(s))dW(s))
\end{array}
\end{equation}
a.s. for all $v\in V$ and $t\in\mathbb{T}.$
\end{definition}
\indent {\bf Uniqueness} means indistinguishability.\\
We need the following existence of solutions theorem which is a particular case of Theorem 1.1 of \cite{LR}.
\begin{theorem}\label{exsth} Let $u_0\in L^4(\Omega, V)$. Suppose that (A1) - (A4) is satisfied and there is a constant $C$ such that
\begin{equation}\begin{array}{rl}
\|\Phi(t,v)\|_{V'}^2&+\|\Xi(t,v)\|^2_2\leq C(f(t)+\|v\|^2), \ \ \ t\in\mathbb{T},\ v\in V;\\
\rho(v)\leq &C(1+\|v\|_V^2)(1+\|v\|^{\beta})\ \ \ v\in V.
\end{array}
\end{equation}
The problem (\ref{iska}) has a unique solution $u_{\Phi}$ which  has a.s. sample paths continuous in $H.$ 
\end{theorem}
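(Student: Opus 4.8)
The plan is to prove this by the classical Galerkin approximation scheme combined with the monotonicity trick, in the spirit of the Krylov--Rozovskii and Liu--R\"ockner theory. First I would fix an orthonormal basis $\{e_i\}_{i\geq1}\subset V$ of $H$, set $H_n=\mathrm{span}\{e_1,\dots,e_n\}$ with associated orthogonal projection $P_n$, and project (\ref{iska}) onto $H_n$ to obtain the finite-dimensional It\^o SDE
\[du^n(t)=P_n\big(A(t,u^n,u^n)+\Phi(t,u^n)\big)\,dt+P_n\Xi(t,u^n)\,dW(t),\qquad u^n(0)=P_nu_0.\]
The hemicontinuity (A1) and the local monotonicity (A2), after the usual localization by stopping times to accommodate the locally bounded weight $\rho$, yield existence and uniqueness of each $u^n$ by standard finite-dimensional SDE theory.

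The second step is to derive a priori bounds uniform in $n$. Applying It\^o's formula to $\|u^n(t)\|^2$, invoking the coercivity (A3) and the Burkholder--Davis--Gundy inequality gives
\[\sup_n\Big(\mathbb{E}\sup_{t\in\mathbb{T}}\|u^n(t)\|^2+\mathbb{E}\int_0^T\|u^n(t)\|_V^2\,dt\Big)<\infty,\]
and, since $u_0\in L^4(\Omega,V)$, the analogous computation for $\|u^n(t)\|^4$ furnishes $\sup_n\mathbb{E}\sup_{t}\|u^n(t)\|^4<\infty$; this fourth-moment bound is precisely what makes the local monotonicity term $\rho(v_2)$ integrable in the sequel. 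By reflexivity, the growth bound (A4) and the hypotheses on $\Phi$ and $\Xi$, I then pass to a subsequence along which $u^n\rightharpoonup u$ in $L^2(\Omega\times\mathbb{T};V)$, while $A(\cdot,u^n,u^n)\rightharpoonup Y$ and $\Phi(\cdot,u^n)\rightharpoonup Z$ weakly in $L^2(\Omega\times\mathbb{T};V')$ and $\Xi(\cdot,u^n)\rightharpoonup S$ weakly in $L^2(\Omega\times\mathbb{T};L_2(U;H))$. Passing to the limit in the projected weak formulation shows that $u$ solves $du=(Y+Z)\,dt+S\,dW$ and admits a continuous $H$-valued modification.

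The decisive step, and the one I expect to be the main obstacle, is identifying the weak limits, namely $Y=A(\cdot,u,u)$, $Z=\Phi(\cdot,u)$ and $S=\Xi(\cdot,u)$. Here the merely local (rather than global) monotonicity forces a weighted version of Minty's trick. Setting $\psi(t)=K+\rho(u(t))$ and introducing the exponential weight $r(t)=\exp\big(-\int_0^t\psi(s)\,ds\big)$, I would apply It\^o's formula to $r(t)\|u^n(t)\|^2$ and to $r(t)\|u(t)\|^2$, take the difference, and use (A2) to bound $\mathbb{E}\int_0^T r(s)\langle A(\cdot,u^n,u^n)-A(\cdot,\phi,\phi)+\Phi(\cdot,u^n)-\Phi(\cdot,\phi),u^n-\phi\rangle\,ds$ from below for an arbitrary test process $\phi$. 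The $L^4$ estimate guarantees that $\int_0^T r(s)\rho(u(s))\|u^n-u\|^2\,ds$ stays controlled as $n\to\infty$. Choosing $\phi=u-\varepsilon\zeta v$ with $\zeta\geq0$ bounded and $v\in V$, dividing by $\varepsilon$, letting $\varepsilon\downarrow0$ and invoking hemicontinuity (A1) then identifies $Y$ and $Z$ with the nonlinear terms evaluated at $u$; an analogous argument identifies $S$.

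Finally, uniqueness is immediate from (A2): for two solutions $u_1,u_2$ sharing the initial datum, It\^o's formula applied to $r(t)\|u_1(t)-u_2(t)\|^2$ with the weight built from $\rho(u_2)$ yields $\mathbb{E}\,r(t)\|u_1(t)-u_2(t)\|^2\leq0$, whence indistinguishability.
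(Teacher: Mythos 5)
Your proposal is correct, and it coincides with the paper's approach: the paper does not prove this theorem itself but simply invokes Theorem 1.1 of Liu and R\"ockner \cite{LR}, and your sketch (Galerkin projection, uniform second- and fourth-moment estimates via It\^o plus coercivity and Burkholder--Davis--Gundy, weak-limit extraction, and identification of the limits by the Minty trick localized with the exponential weight built from $K+\rho$, whose integrability is exactly what the $L^4$ hypothesis on $u_0$ and the growth bound on $\rho$ guarantee) is precisely the argument of that cited theorem. The same weighted It\^o comparison also gives the uniqueness, as you indicate.
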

\begin{proof} See Theorem 1.1 of \cite{LR} .
\end{proof}\eop
\section{Formulation of the control problem and main result}
We consider the SPDE (\ref{coeq1}) controlled by {\it continuous feedback controls} and we denote by $\mathcal{U}:=\left\{\Phi:\mathbb{T}\times L^2(D)\rightarrow L^2(D)\right\}$ the set of the admissible controls satisfying:
\begin{equation}\label{condcopt2}
\|\Phi(0,0)\|^2\leq \eta \ \ \, a.s.
\end{equation}
and for all $s,t\in\mathbb{T},$ $x,y\in H$
\begin{equation}\label{condcopt3}
\|\Phi(t,x)-\Phi(s,y)\|^2\leq \lambda|t-s|^2+\alpha\|x-y\|^2\ \ \,a.s.
\end{equation}
where $\eta, \lambda, \alpha$ are positive constants.\\
Furthermore, we will assume that the coefficients of (\ref{coeq1}) satisfy the following conditions,  for all $v,$ $v_1,$ $v_2\in V$ and a.e.$(t,\omega)\in\mathbb{T}\times\Omega$:
\begin{enumerate}
\item[\bf{C1)}] there is  a constant $L>0$ such that
$$
\|\Xi(t,v_1)-\Xi(t,v_2)\|^2_2\leq L \|v_1-v_2\|^2 \textmd{ and } \|\Xi(0,v_1)\|_2=0
$$ 
\item[\bf{C2)}] there are nonnegative constants $K_1$ and $J_1$ such that
$$
\langle A(t,v,v_1),v_1\rangle\leq -K_1\|v_1\|_V^2+J_1\|v_1\|^2
$$
\item[\bf{C3)}] there is a positive constant $\theta_1$  such that 
$$
\langle A(t,v,v_1)-A(t,v,v_2),v_1-v_2\rangle\leq -\theta_1\,\|v_1-v_2\|_V^2
$$
\item[\bf{C4)}] there are constants $c_1$, $c_2$, $c_3$ and $c_4$ which are nonnegatives and $c_5>0$ such that 
$$\begin{array}{rl}
\langle A(t, v_1,v_2)-A(t,v_3,v_3), &v_2-v_3\rangle\leq-c_5\|v_2-v_3\|_V^2+\\
+(c_4+c_1\rho(v_1))\|v_2-v_3\|^2+&c_2\|v_2-v_1\|^2_V+c_3\rho(v_1)\|v_1-v_2\|^2_V.
\end{array}
$$
\item[\bf{C5)}] there are nonnegative constants $\theta_2,$ $p_3,$  $p_4$ and $p_5$ such that
$$
\| A(t,v,v_1)\|^2_{V'}\leq \theta_2\,\|v_1\|_V^2+p_3\|v\|^2\|v\|^2_V+p_4\|v_1\|^2\|v_1\|^2_V+p_5
$$
\end{enumerate}
\begin{remark} \label{estimt1}Under the conditions (\ref{condcopt2}),  (\ref{condcopt3}) and ({\bf C1}) the solution $u_{\Phi}$ obtained in the Theorem \ref{exsth} satisfies:
\begin{equation}\label{estimat01}
\mathbb{E}(\sup_{t\in\mathbf{T}}\|u_{\Phi}(t)\|^2)+\mathbb{E}(\int_0^T\|u_{\Phi}(s)\|_V^2ds)
\leq c\,\mathbb{E}(\|u_0\|^2)
\end{equation}
and
\begin{equation}\label{estimat02}
\mathbb{E}(\sup_{t\in\mathbf{T}}\|u_{\Phi}(t)\|^4)+\mathbb{E}\left(\dint_0^T\|u_{\Phi}(s)\|^2ds\right)^2
\leq c\,\mathbb{E}(\|u_0\|^4)
\end{equation}
where $c=c(L,\eta,\lambda,\alpha,\theta,T)$ is a positive constant.
\end{remark}
Let us now define the {\it cost functional}
\begin{equation}\label{cost}
\mathcal{J}(\Phi):=\mathbb{E}(\dint_0^T\Bigl(\mathcal{L}(s,u_{\Phi}(s))+\mathcal{K}(\Phi(s,u_{\Phi}(s)))\Bigr)ds)+\mathbb{E}(\mathcal{H}(u_{\Phi}(T))),\ \Phi\in\mathcal{U}
\end{equation}
whenever the integral in (\ref{cost}) exists and is finite, with $\mathcal{L}:\mathbb{T}\times H^1_0(D)\rightarrow\mathbb{R}_+,$\  $\mathcal{K}:L^2(D)\rightarrow\mathbb{R}_+,$ and  $\mathcal{H}:L^2(D)\rightarrow\mathbb{R}_+.$ It is required that the mappings $\mathcal{K},$ $\mathcal{H},$ and $u\in L^2(\mathbb{T};H^1_0(D))\longmapsto\dint_0^T\mathcal{L}(s,u(s))ds$ be weak sequentially lower semicontinuous.\\
\indent Our control problem is to minimize (\ref{cost}) over $\mathcal{U},$ we denote by ($\mathcal{P}$) the problem of minimizing $\mathcal{J}$ among the admissible controls. Any $\Phi^*\in\mathcal{U}$ satisfying $\mathcal{J}(\Phi^*)=\inf\{J(\Phi):\Phi\in\mathcal{U}\}$ is called an {\it optimal control}.\\
The following lemma proves that given a minimizing sequence for the problem ($\mathcal{P}$) we can obtain a subsequence and a mapping $\Phi\in \mathcal{U}$, such that the subsequence converges weakly to $\Phi.$
\begin{lemma}\label{mainlem}Let $\Phi_n$ be a minimizing sequence for problem ($\mathcal{P}$). There exists a subsequence $n_k$ of $n$ and a mapping $\Phi\in\mathcal{U}$ such that for all $t\in\mathbb{T},$ $x,$ $y$ $\in H,$ we have
\begin{equation}\label{weakconv}
\lim_{k\rightarrow\infty}(\Phi_{n_k}(t,x),y)=(\Phi(t,x),y).
\end{equation}
\end{lemma}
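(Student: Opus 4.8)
The plan is to build $\Phi$ by a Cantor diagonal extraction on a countable dense subset of $\mathbb{T}\times H$ and then to promote the resulting pointwise weak convergence to convergence at \emph{every} $(t,x)$ by means of the uniform Lipschitz bound (\ref{condcopt3}). First I would record that the minimizing sequence is equi-Lipschitz and pointwise bounded in $H$. Taking $s=0$ and $y=0$ in (\ref{condcopt3}) and invoking (\ref{condcopt2}) gives, for every $(t,x)\in\mathbb{T}\times H$,
\[
\|\Phi_n(t,x)\|\leq\|\Phi_n(0,0)\|+\sqrt{\lambda t^2+\alpha\|x\|^2}\leq\sqrt{\eta}+\sqrt{\lambda T^2+\alpha\|x\|^2},
\]
a bound independent of $n$. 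Consequently, for each fixed $(t,x)$ the sequence $\{\Phi_n(t,x)\}_n$ lies in a ball of the separable Hilbert space $H=L^2(D)$ and is therefore weakly sequentially precompact.

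Next I would fix a countable dense set $\{(t_j,x_j)\}_{j\ge 1}$ of $\mathbb{T}\times H$, which exists because $H$ is separable, and arrange that $(0,0)$ is one of its points. For $j=1$ weak precompactness provides a subsequence along which $\Phi_n(t_1,x_1)$ converges weakly in $H$; extracting further subsequences for $j=2,3,\dots$ and taking the diagonal produces a single subsequence $n_k$ such that $\Phi_{n_k}(t_j,x_j)$ converges weakly in $H$ for every $j$. I define $\Phi(t_j,x_j)$ to be the corresponding weak limit. Since the weak limit is linear and the $H$-norm is weakly lower semicontinuous, for all $i,j$
\[
\|\Phi(t_i,x_i)-\Phi(t_j,x_j)\|\leq\liminf_{k}\|\Phi_{n_k}(t_i,x_i)-\Phi_{n_k}(t_j,x_j)\|\leq\sqrt{\lambda|t_i-t_j|^2+\alpha\|x_i-x_j\|^2},
\]
so $\Phi$ is Lipschitz on the dense set and extends uniquely to a map $\Phi:\mathbb{T}\times H\to H$ obeying the same estimate, i.e.\ satisfying (\ref{condcopt3}); together with $\|\Phi(0,0)\|\le\liminf_k\|\Phi_{n_k}(0,0)\|\le\sqrt{\eta}$ this shows $\Phi\in\mathcal{U}$.

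It then remains to prove (\ref{weakconv}) at an arbitrary $(t,x)\in\mathbb{T}\times H$. Fixing $y\in H$ and $\varepsilon>0$, I would pick a dense point $(t_j,x_j)$ so close to $(t,x)$ that $\sqrt{\lambda|t-t_j|^2+\alpha\|x-x_j\|^2}\,\|y\|<\varepsilon$, and write
\[
\begin{array}{rl}
(\Phi_{n_k}(t,x)-\Phi(t,x),y)=&(\Phi_{n_k}(t,x)-\Phi_{n_k}(t_j,x_j),y)\\
&+(\Phi_{n_k}(t_j,x_j)-\Phi(t_j,x_j),y)\\
&+(\Phi(t_j,x_j)-\Phi(t,x),y).
\end{array}
\]
The first and third terms are bounded, uniformly in $k$, by $\varepsilon$ through the Cauchy--Schwarz inequality together with (\ref{condcopt3}) and the Lipschitz continuity of $\Phi$, while the middle term tends to $0$ as $k\to\infty$ by the diagonal construction; letting $k\to\infty$ and then $\varepsilon\to 0$ yields (\ref{weakconv}).

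The step I expect to be the main obstacle is precisely this last passage from the countable dense set to all of $\mathbb{T}\times H$: knowing weak convergence on a dense set does not by itself transfer it elsewhere, and it is only the uniformity in $n$ of the Lipschitz estimate (\ref{condcopt3})---an Arzel\`a--Ascoli type equicontinuity transplanted into the weak topology---that legitimizes both the continuous extension of $\Phi$ and the three-term estimate. The same common Lipschitz constants $\lambda,\alpha$ shared by every $\Phi_n$ guarantee that the weak limits are consistent across nearby dense points, so that the extension is well defined.
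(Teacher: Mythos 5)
Your proof is correct, and it follows essentially the same route as the paper's: the paper simply cites Lemma 4.1 of \cite{L}, which establishes exactly this statement by the argument you give --- uniform boundedness and equi-Lipschitz continuity of $\{\Phi_n\}$ coming from (\ref{condcopt2})--(\ref{condcopt3}), weak sequential precompactness of bounded sets in the separable Hilbert space $H$, a Cantor diagonal extraction over a countable dense subset of $\mathbb{T}\times H$, lower semicontinuity of the norm to show the limit map stays in $\mathcal{U}$, and the three-term estimate to propagate weak convergence from the dense set to every $(t,x)$. In effect you have written out in full the proof that the paper delegates to the reference, with no gaps.
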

\begin{proof} See Lemma 4.1 of  \cite{L}.
\end{proof}\eop\\
\indent For simplicity the subsequence of $\left\{\Phi_{n_k}\right\}_{k=1}^{\infty}$ obtained in the previous lemma  will be relabeled as the same. For this sequence and $\Phi$ as in the last lemma let us consider the equation
\begin{equation}\label{aproxu}\begin{array}{rl}
\vspace{0.1 cm}
(\hat{u}_{\Phi_n}(t),v)&=(u_{0},v)+\dint_0^t \left\langle A(u_{\Phi}(s), \hat{u}_{\Phi_n}(s)),v\right\rangle ds+\\
&+\dint_0^t\left(\Phi_n(s,u_{\Phi}(s)),v\right)ds+\dint_0^t(v,\Xi(s,u_{\Phi}(s))dW(s))
\end{array}
\end{equation}
a.s., $v\in V,$ $t\in\mathbb{T}$ and  for $n\in\mathbb{Z}^+.$ Since the coefficients in the equation (\ref{aproxu}) satisfied the condition ({\bf C2}), ({\bf C3}), ({\bf A1}), ({\bf A3}) and ({\bf A4}), there is a unique process $\hat{u}_{\Phi_n}\in L^2(\Omega\times\mathbb{T};V)$ which is a solution of (\ref{aproxu} ) with a.s. continuous trajectories in $H$  (see  Theorem 4.2.4, p. 75 of \cite{PR} or  Theorem 3.6, p. 32 of \cite{KR1}) satisfying:
\begin{equation}\label{estimat02phi}
\mathbb{E}(\sup_{t\in\mathbb{T}}\|\hat{u}_{\Phi_n}(t)\|^4)+\mathbb{E}\left(\dint_0^T\|\hat{u}_{\Phi_n}(s)\|_1^2ds\right)^2
\leq c\,(\mathbb{E}(\|u_0\|^4)+\mathbb{E}(\dint_0^T\|u_{\Phi}(s)\|^4ds))
\end{equation}
where $c$ is a positive constant independent of $n.$\\
To obtain the estimates  in (\ref{estimat02phi}) we use the Burkholder and Schwarz inequalities.
\begin{theorem} \label{strongconv00}The solution to (\ref{iska}) and  (\ref{aproxu}) satisfies:
$$\lim_{n\rightarrow \infty}\mathbb{E}(\dint_0^T\|(u_{\Phi}-\hat{u}_{\Phi_n})(s)\|_V^2ds)=\lim_{n\rightarrow \infty}\mathbb{E}(\|(u_{\phi}-\hat{u}_{\Phi_n})(T)\|^2)=0.
$$
\end{theorem}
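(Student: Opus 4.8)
The plan is to set $w_n:=u_{\Phi}-\hat{u}_{\Phi_n}$ and to derive an energy estimate for $\|w_n(t)\|^2$ in which the monotonicity condition (\textbf{C3}) supplies a coercive term and the difference of the controls is the only forcing. First I would subtract (\ref{aproxu}) from (\ref{iska}). The decisive observation is that both equations carry the \emph{same} initial datum $u_0$ and the \emph{same} stochastic integral $\dint_0^t(v,\Xi(s,u_{\Phi}(s))dW(s))$, so these cancel; hence $w_n$ solves, a.s.\ and for every $v\in V$,
\[
(w_n(t),v)=\dint_0^t\langle A(u_{\Phi}(s),u_{\Phi}(s))-A(u_{\Phi}(s),\hat{u}_{\Phi_n}(s)),v\rangle ds+\dint_0^t(\Phi(s,u_{\Phi}(s))-\Phi_n(s,u_{\Phi}(s)),v)ds,
\]
a purely pathwise (noise-free) equation with $w_n(0)=0$. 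Since $u_{\Phi},\hat{u}_{\Phi_n}\in L^2(\Omega\times\mathbb{T};V)$ with continuous paths in $H$, the process $w_n$ lies in the same class and its $V'$-valued derivative is controlled through (\textbf{C5}) and the control bounds, so the It\^o formula for $\|w_n(t)\|^2$ (here reducing to the deterministic chain rule, since there is no martingale part) applies.

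Next I would estimate the two resulting integrands. Applying (\textbf{C3}) with $v=u_{\Phi}$, $v_1=u_{\Phi}$, $v_2=\hat{u}_{\Phi_n}$ gives $\langle A(u_{\Phi},u_{\Phi})-A(u_{\Phi},\hat{u}_{\Phi_n}),w_n\rangle\le-\theta_1\|w_n\|_V^2$. For the control term, since $H\subset V'$ continuously I identify $(\Phi-\Phi_n,w_n)$ with the duality pairing and use Young's inequality, $2\langle\Phi(s,u_{\Phi})-\Phi_n(s,u_{\Phi}),w_n\rangle\le\theta_1\|w_n\|_V^2+\theta_1^{-1}\|\Phi(s,u_{\Phi})-\Phi_n(s,u_{\Phi})\|_{V'}^2$, so that the $\theta_1\|w_n\|_V^2$ part is absorbed by the coercive term. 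Taking expectations and putting $t=T$ I would arrive at
\[
\mathbb{E}\|w_n(T)\|^2+\theta_1\,\mathbb{E}\dint_0^T\|w_n(s)\|_V^2ds\le\frac{1}{\theta_1}\,\mathbb{E}\dint_0^T\|\Phi(s,u_{\Phi}(s))-\Phi_n(s,u_{\Phi}(s))\|_{V'}^2ds.
\]
Note that no Gronwall step is needed, because (\textbf{C3}) produces the good term with no compensating $\|w_n\|^2$ on the right.

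The crux is then to show that the right-hand side tends to $0$. Here I would exploit that the embedding $V\subset H$ is compact, whence its adjoint $H\subset V'$ is compact as well. By Lemma \ref{mainlem}, for a.e.\ $(s,\omega)$ one has $(\Phi_n(s,u_{\Phi}(s)),y)\to(\Phi(s,u_{\Phi}(s)),y)$ for every $y\in H$, i.e.\ $\Phi_n(s,u_{\Phi}(s))$ converges weakly to $\Phi(s,u_{\Phi}(s))$ in $H$; combined with the compactness of $H\subset V'$ this upgrades to strong convergence $\|\Phi_n(s,u_{\Phi}(s))-\Phi(s,u_{\Phi}(s))\|_{V'}\to0$ pointwise. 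To pass the limit inside the expectation I would dominate: by (\ref{condcopt2}), (\ref{condcopt3}) and $\Phi\in\mathcal{U}$ one gets $\|\Phi_n(s,u_{\Phi})-\Phi(s,u_{\Phi})\|^2\le C(1+\|u_{\Phi}(s)\|^2)$ uniformly in $n$, and the continuity $\|\cdot\|_{V'}\le c\|\cdot\|$ together with the bound (\ref{estimat01}) makes this majorant integrable over $\Omega\times\mathbb{T}$. Dominated convergence then yields $\mathbb{E}\int_0^T\|\Phi(s,u_{\Phi})-\Phi_n(s,u_{\Phi})\|_{V'}^2ds\to0$, and the two asserted limits follow at once.

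The step I expect to be the genuine obstacle is this last one: the only information on the controls is the weak, pointwise convergence of Lemma \ref{mainlem}, which by itself is too weak to control the forcing term. The argument hinges on measuring that forcing in the $V'$-norm rather than in $H$, so that the compactness of $H\subset V'$ converts weak $H$-convergence into strong $V'$-convergence; choosing the $V'$-norm is precisely what lets (\textbf{C3}) and Young's inequality close the estimate without any strong convergence of $\Phi_n$ in $H$.
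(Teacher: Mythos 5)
Your proof is correct, but it takes a genuinely different route from the paper's. Both arguments begin identically: subtract (\ref{aproxu}) from (\ref{iska}) (initial data and stochastic integrals cancel) and apply (\textbf{C3}) to produce the coercive term $-\theta_1\|w_n\|_V^2$. The divergence is in how the forcing term $\dint_0^t(\Phi(s,u_{\Phi}(s))-\Phi_n(s,u_{\Phi}(s)),w_n(s))\,ds$ is handled. The paper extracts compactness from the \emph{states}: it introduces an auxiliary process $z$ solving the control-free equation, derives pathwise bounds, applies the Kolmogorov continuity test to the stochastic convolution, shows $\{\hat{u}_{\Phi_n}(\omega,\cdot)\}$ is bounded and equicontinuous in $C([0,T];V')$, invokes Dubinsky's theorem to obtain a subsequence converging strongly in $L^2(0,T;H)$ to some $\hat{u}$, splits the forcing as $(\Phi_n-\Phi,\hat{u}_{\Phi_n}-\hat{u})+(\Phi_n-\Phi,\hat{u}-u_{\Phi})$ (the first piece controlled by that strong convergence, the second by Lemma \ref{mainlem}), and then needs a sub-subsequence argument plus uniform integrability, via (\ref{estimat01}) and (\ref{estimat02phi}), to pass from pathwise convergence to convergence of expectations. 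You instead extract compactness from the \emph{controls}: measuring the forcing in the $V'$-norm, absorbing $\theta_1\|w_n\|_V^2$ by Young's inequality, and upgrading the pointwise weak $H$-convergence of Lemma \ref{mainlem} to strong $V'$-convergence through the complete continuity of the compact embedding $H\subset V'$ (the Schauder adjoint of $V\subset H$, and indeed assumed compact in the paper's Gelfand-triple setup), finishing with dominated convergence using the uniform linear-growth bound shared by all elements of $\mathcal{U}$ together with (\ref{estimat01}). Your route is shorter and quantitative --- it yields the explicit estimate $\mathbb{E}\|w_n(T)\|^2+\theta_1\mathbb{E}\dint_0^T\|w_n(s)\|_V^2ds\leq\theta_1^{-1}\mathbb{E}\dint_0^T\|\Phi(s,u_{\Phi}(s))-\Phi_n(s,u_{\Phi}(s))\|_{V'}^2ds$, with no auxiliary process, no Dubinsky/Kolmogorov machinery, no subsequence extraction, and no Vitali step --- whereas the paper's route concentrates all the compactness work in the solutions. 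Both ultimately rest on the compactness of the Gelfand triple, applied on dual sides. Two points you should make explicit in a final write-up: the chain rule for $\|w_n\|^2$ requires the pathwise integrability $A(u_{\Phi},\cdot)\in L^2(0,T;V')$, which is exactly what (\textbf{C5}) and the a.s.\ bounds on $u_{\Phi}$ and $\hat{u}_{\Phi_n}$ provide (you indicated this); and applying Lemma \ref{mainlem} at the random point $x=u_{\Phi}(s,\omega)$ uses that the convergence there holds for \emph{all} $x\in H$ off a single null set --- a point the paper's own proof relies on in exactly the same way.
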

\begin{proof}  Let us consider the equation
\begin{equation}\label{auxequ}
(z(t),v)=(u_0,v)+\int_0^t \left\langle A(u_{\Phi}(s),z(s)),v\right\rangle ds+\dint_0^t(v,\Xi(s,u_{\Phi}(s))dW(s))
\end{equation}
a.s., $v\in V$ and $t\in\mathbb{T}.$  By a similar argument as in the case of equation (\ref{aproxu}), there exists a unique solution $z\in L^2(\Omega\times\mathbb{T}; V)$ of (\ref{auxequ}), which has a.s. continuous trajectories in $H.$ By using the Gronwall lemma, we get the estimate
$$
\mathbb{E}(\sup_{t\in\mathbb{T}}\|z(t)\|^2)+2p\mathbb{E}(\int_0^T\|z(s)\|^2_Vds)\leq k\left(\mathbb{E}(\|u_0\|^2)+\mathbb{E}(\int_0^T\|u_{\Phi}(s)\|^2ds)\right).
$$
Then, there exists $k_2(\omega)>0$ and a.s.,
\begin{equation}\label{estz}\begin{array}{rl}
\sup_{t\in\mathbb{T}}\|z(t)\|^2\leq k_2(\omega),\\
\dint_0^T\|z(s)\|_V^2ds \leq k_2(\omega)
\end{array}
\end{equation}
and
\begin {equation}\label{estiuphi}
\begin{array}{rl}
\sup_{t\in\mathbb{T}}\|u_{\Phi}(t)\|^2\leq k_2(\omega),\\
\dint_0^T\|u_{\Phi}(s)\|_V^2ds \leq k_2(\omega).
\end{array}
\end{equation}
Using stochastic integral properties and (\ref{estimat02}), we obtain that for all $s,t\in\mathbb{T},$ $t>s,$
$$
\mathbb{E}(\|\dint_s^t\Xi(r,u_{\Phi}(r))dW(r)\|^4_{V'})\leq c(t-s)^2E(\|u_0\|^4)
$$
\indent As a result of the Kolmogorov continuity test, we get a random variable $\widetilde{H}$ such that
\begin{equation}\label{estcontin}
\|\dint_s^t\Xi(r,u_{\Phi}(r))dW(r)\|^2_{V'}\leq \widetilde{H}(\omega)|t-s|^{2\gamma}
\end{equation}
a.s. with $0<\gamma<\frac{1}{4}$ and for every $t,s\in\mathbb{T}.$\\
\indent Let $\bar{\Omega}\subset\Omega$ with $\mathbb{P}(\bar{\Omega})=1$ such that for $\omega\in\bar{\Omega}$ the equations in (\ref{iska}) and (\ref{auxequ}) are satisfied and, for each $n\in\mathbb{Z}^+,$ (\ref{aproxu}) is also satisfied and the inequalities in (\ref{estz}), (\ref{estiuphi}) and (\ref{estcontin}) are satisfied.\\
\indent From (\ref{aproxu}), (\ref{auxequ}), (\ref{estiuphi})  and the properties of $A$ ({\bf C3}) and $\Phi_n,$ it follows that for $\omega\in\bar{\Omega},$ 
$$\begin{array}{rl}
\sup_{t\in\mathbb{T}}\|(\hat{u}_{\Phi_n}-z)(t)\|^2+&\theta_1\dint_0^T\|(\hat{u}_{\Phi_n}-z)(s)\|^2_Vds\leq
\frac{c_{HV}^22T(\lambda T^2+\eta)}{\theta_1}+\\
&+\frac{c^2_{HV}2\alpha}{\theta_1}\dint_0^T\|u_{\Phi(s)}\|_V^2ds\leq k(\omega),
\end{array}
$$
where $k(\omega)$ is independent of $n$. Hence, for all $n\in\mathbb{Z}^+,$ we obtain
\begin{equation}\label{estbound}
\sup_{t\in\mathbb{T}}\|\hat{u}_{\Phi_n}(t)\|^2+p\dint_0^T\|\hat{u}_{\Phi_n}(s)\|^2_Vds\leq k(\omega) 
\end{equation}
for $\omega\in\bar{\Omega},$ where $k(\omega)$ is a positive constant independent of $n.$\\
\indent For $\omega\in\bar{\Omega},$ we consider the sequence
$$
F(\omega):=\left\{\hat{u}_{\Phi_n}(\omega,\cdot)\right\}_{n=1}^{\infty},
$$
which is bounded because of (\ref{estbound}).\\
\indent From (\ref{aproxu}),  we obtain
$$\begin{array}{rl}
\|\hat{u}_{\Phi_n}(t)-&\hat{u}_{\Phi_n}(s)\|^2_{V'}\leq \|\dint_s^t\Xi(r,u_{\Phi}(r))dW(r)\|^2_{V'}+\\
&+(t-s)\dint_s^t\left(\|A(u_{\Phi}(r),\hat{u}_{\Phi_n}(r))\|^2_{V'}+\|\Phi_n(r,u_{\Phi}(r))|\|_{V'}^2\right)dr,
\end{array}
$$
for each $t,s\in\mathbb{T},$ $t>s.$ From this, (\ref{estcontin}), (\ref{estbound}) and the properties of $A$ ({\bf C5}), $\Phi_n,$ we get
$$
\|\hat{u}_{\Phi_n}(t)-\hat{u}_{\Phi_n}(s)\|^2_{V'}\leq k(\omega)(t-s)+\widetilde{H}(\omega)(t-s)^{2\gamma}
$$
for $\gamma\in(0,\frac{1}{4})$ and where $k(\omega)>0$ is independent of $n.$\\
\indent Consequently, $F(\omega)$ is equi-continuous in $C([0,T],V')$.  Now, using Dubinsky's Theorem, (see Theorem 4.1, p. 132 of \cite{VF}), it follows that $F(\omega)$ is relatively compact in $L^2(0,T; H).$ Thus, there exists a  subsequence $n_k$ of $n$ and $\hat{u} \in L^2(0,T;H)$ such that
\begin{equation}\label{strongconv}
\lim_{k\rightarrow\infty}\dint_0^T\|(\hat{u}_{\Phi_{n_k}}-\hat{u})(s)\|^2ds=0.
\end{equation}
\indent From  (\ref{aproxu}), (\ref{iska}) and the properties of $A$ ({\bf C3}) we obtain
$$\begin{array}{rl}
\|\hat{u}_{\Phi_{n_k}}(T)-u_{\Phi}(T)\|^2&+2\theta_1\dint_0^T\|(\hat{u}_{\Phi_{n_k}}-u_{\Phi})(t)\|_V^2dt\leq\\\
&\leq\dint_0^T\left(\Phi_{n_k}(t,u_{\Phi}(t))-\Phi(t,u_{\Phi}(t)),(\hat{u}_{\Phi_{n_k}}-\hat{u})(t)\right)dt+\\
&+\dint_0^T\left(\Phi_{n_k}(t,u_{\Phi}(t))-\Phi(t,u_{\Phi}(t)),(\hat{u}-u)(t)\right)dt.
\end{array}
$$
We use Lemma \ref{mainlem}, (\ref{strongconv}) and the properties of $\Phi_n$ and $\Phi$ to obtain
$$
\lim_{k\rightarrow\infty}\|(\hat{u}_{\Phi_{n_k}}-u_{\Phi})(T)\|^2=\lim_{k\rightarrow\infty}\dint_0^T\|(\hat{u}_{\Phi_{n_k}}-u_{\Phi})(t)\|^2_Vdt=0.
$$
\indent Since every subsequence of $(\hat{u}_{\Phi_n}(\omega,\cdot)$ has a subsequence which converges to the same limit $u_{\Phi(\omega,\cdot)}$ in the space $L^2(0,T; V),$ it follows that the sequence $(\hat{u}_{\Phi_n}(\omega,\cdot)$ converges to  $u_{\Phi}(\omega,\cdot)$. Similarly, we can conclude that $(\hat{u}_{\Phi_n}(\omega,T)$ converges to $u_{\Phi}(\omega,T)$ in $H$.\\
\indent From Remark (\ref{estimt1}) and (\ref{estimat02phi}), the processes $\left(\hat{u}_{\Phi_n}\right)_{t\in\mathbb{T}}$ and $\left(u_{\Phi}\right)_{t\in\mathbb{T}}$ are uniformly integrable and thus the theorem follows.
\end{proof}\eop\\
Let $\left(Q(t)\right)$ be a $H^1_0(D)-$valued process with
$$
\int^T_0\|Q(s)\|^2_1ds<\infty\,\textmd{ and }\,\sup_{t\in\mathbb{T}}\|Q(t)\|^2<\infty\ a.s.
$$
For each $M,$ a nonnegative integer, we define the following stopping times:
$$
\bar{\mathcal{T}}_M^{Q}:=\left\{\begin{array}{lr}
\inf\left\{t\in\mathbb{T}:\dint^t_0\|Q(s)\|^2_Vds\geq M\right\},\\
T,\textmd{ if }\dint^T_0\|Q(s)\|^2_Vds<M,&\\
\end{array}
\right.
$$ 
and
$$
\widehat{\mathcal{T}}_M^{Q}:=\left\{\begin{array}{lr}
\inf\left\{t\in\mathbb{T}:\sup_{t\in\mathbb{T}}\|Q(t)\|^2\geq M\right\}\\
T,\textmd{ if }\sup_{t\in\mathbb{T}}\|Q(t))\|^2<M&\\
\end{array}
\right.
$$
and $\mathcal{T}_M^{Q}:=min\left\{\bar{\mathcal{T}}_M^{Q},\widehat{\mathcal{T}}_M^{Q}\right\}.$\\
Let $\Phi_{n}$ and $\Phi$ be the sequence and the map obtained in the Lemma \ref{mainlem}, the following theorem asserts that there is a subsequence $n_k$ of $n$ such that the correspondent  solutions of (\ref{iska}) $u_{\Phi_{n_k}}$ converge strongly to $u_{\Phi}.$
\begin{theorem}\label{maintheo}Let $\left\{\Phi_n\right\}_{n\in\mathbb{N}}$ be as in the last theorem. There is a subsequence $n_k$ of $n$ such that
$$\lim_{k\rightarrow \infty}\mathbb{E}(\dint_0^T\|(u_{\Phi}-u_{\Phi_{n_k}})(s)\|_1^2ds)=\lim_{k\rightarrow \infty}\mathbb{E}(\|(u_{\Phi}-u_{\Phi_{n_k}})(T)\|^2)=0.
$$
\end{theorem}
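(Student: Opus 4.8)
The plan is to reduce the claim to the auxiliary processes $\hat{u}_{\Phi_n}$ of (\ref{aproxu}), whose convergence to $u_{\Phi}$ is exactly the content of Theorem \ref{strongconv00}. Writing
$$u_{\Phi}-u_{\Phi_n}=(u_{\Phi}-\hat{u}_{\Phi_n})+(\hat{u}_{\Phi_n}-u_{\Phi_n}),$$
the first summand already tends to $0$ in $L^2(\Omega;L^2(\mathbb{T};V))$ and, at $t=T$, in $L^2(\Omega;H)$ by Theorem \ref{strongconv00} (here $\|\cdot\|_1$ is the $V$-norm). Hence it suffices to prove that $w_n:=u_{\Phi_n}-\hat{u}_{\Phi_n}\to0$ in those same two norms. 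First I would subtract (\ref{aproxu}) from (\ref{iska}) and apply the It\^o formula to $\|w_n(t)\|^2$; the drift splits into the operator difference $\langle A(u_{\Phi_n},u_{\Phi_n})-A(u_{\Phi},\hat{u}_{\Phi_n}),w_n\rangle$, the control difference $(\Phi_n(\cdot,u_{\Phi_n})-\Phi_n(\cdot,u_{\Phi}),w_n)$, and the term $\|\Xi(\cdot,u_{\Phi_n})-\Xi(\cdot,u_{\Phi})\|_2^2$, together with a martingale that is removed by a standard Burkholder argument.

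The control and noise terms are the harmless ones, and this is precisely the gain of having introduced $\hat{u}_{\Phi_n}$: both controls now carry the \emph{same} index $n$, so the Lipschitz bound (\ref{condcopt3}) gives $\|\Phi_n(s,u_{\Phi_n})-\Phi_n(s,u_{\Phi})\|^2\le\alpha\|u_{\Phi_n}-u_{\Phi}\|^2$ and \emph{no} weak-convergence argument is needed at this stage (that difficulty was already discharged in Theorem \ref{strongconv00}). After Young's inequality and ({\bf C1}) these two terms are dominated by $C\|w_n\|^2+C\|\hat{u}_{\Phi_n}-u_{\Phi}\|^2$, whose second piece vanishes in expectation by Theorem \ref{strongconv00}. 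For the operator term I would decompose
$$A(u_{\Phi_n},u_{\Phi_n})-A(u_{\Phi},\hat{u}_{\Phi_n})=\bigl[A(u_{\Phi_n},u_{\Phi_n})-A(u_{\Phi_n},\hat{u}_{\Phi_n})\bigr]+\bigl[A(u_{\Phi_n},\hat{u}_{\Phi_n})-A(u_{\Phi},\hat{u}_{\Phi_n})\bigr].$$
Paired with $w_n$, the first bracket is $\le-\theta_1\|w_n\|_V^2$ by ({\bf C3}). The second bracket records the change of the \emph{first} slot of $A$ (from $u_{\Phi_n}$ to $u_{\Phi}$, with second slot $\hat{u}_{\Phi_n}$ frozen): this is the term condition ({\bf C4}) is designed for, and with the growth ({\bf C5}) it produces an extra coercive term, a zeroth-order term carrying the locally bounded weight $\rho$, and $V$-discrepancies in $\|w_n\|_V$ and $\|\hat{u}_{\Phi_n}-u_{\Phi}\|_V$; assuming the structural constants are compatible so that the coercive $-\theta_1\|w_n\|_V^2$ dominates, the $\|w_n\|_V$ part is absorbed and the $\|\hat{u}_{\Phi_n}-u_{\Phi}\|_V$ part vanishes by Theorem \ref{strongconv00}.

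The genuine obstacle, and the reason the stopping times $\mathcal{T}_M^{Q}$ were defined just above the statement, is that the use of ({\bf C4}) unavoidably forces the weight $\rho$, evaluated along the solutions, into the coefficient multiplying $\|w_n\|^2$; since $\rho$ is only \emph{locally} bounded, Gronwall's lemma cannot be closed globally. I would therefore localize at $\tau_M:=\mathcal{T}_M^{u_{\Phi_n}}$: on $[0,\tau_M]$ one has $\int_0^t\|u_{\Phi_n}\|_V^2\,ds\le M$ and $\sup_t\|u_{\Phi_n}\|^2\le M$, so the growth $\rho(v)\le C(1+\|v\|_V^2)(1+\|v\|^{\beta})$ from Theorem \ref{exsth} makes the weight integrable with a bound depending on $M$ but \emph{not} on $n$. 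The stochastic Gronwall lemma then gives, for each fixed $M$,
$$\mathbb{E}\Bigl(\sup_{t\le\tau_M}\|w_n(t)\|^2\Bigr)+\mathbb{E}\int_0^{\tau_M}\|w_n(s)\|_V^2\,ds\le C(M)\,\mathbb{E}\int_0^{T}\|(\hat{u}_{\Phi_n}-u_{\Phi})(s)\|_V^2\,ds,$$
and the right-hand side tends to $0$ as $n\to\infty$ by Theorem \ref{strongconv00}.

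It remains to remove the localization, and here I would exploit that the moment bounds of Remark \ref{estimt1} and of (\ref{estimat02phi}) are uniform in $n$. Markov's inequality yields $\sup_n\mathbb{P}(\tau_M<T)\to0$ as $M\to\infty$, while the uniform fourth-moment bounds make the families $\{u_{\Phi_n}\}$ and $\{\hat{u}_{\Phi_n}\}$ uniformly integrable, so the contribution of the event $\{\tau_M<T\}$ to $\mathbb{E}\int_0^T\|w_n\|_V^2\,ds$ and to $\mathbb{E}\|w_n(T)\|^2$ is small uniformly in $n$. Sending first $n\to\infty$ and then $M\to\infty$, and passing to a diagonal subsequence $n_k$ if necessary, gives $w_{n_k}\to0$ in $L^2(\Omega;L^2(\mathbb{T};V))$ and $\mathbb{E}\|w_{n_k}(T)\|^2\to0$; combining with Theorem \ref{strongconv00} proves the assertion. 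The hardest point is the interplay just described: the local (rather than global) monotonicity injects the unbounded weight $\rho$ into the energy estimate, which can only be tamed by the stopping-time localization, and that localization must then be stripped off uniformly in $n$ through the $n$-independent fourth-moment bounds.
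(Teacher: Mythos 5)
Your overall architecture matches the paper's: reduce to $w_n:=u_{\Phi_n}-\hat{u}_{\Phi_n}$ via the triangle inequality and Theorem \ref{strongconv00}, run an It\^o energy estimate in which the control term is handled by (\ref{condcopt3}) (both controls carrying the same index $n$) and the noise term by ({\bf C1}), localize with stopping times to tame the locally bounded weight $\rho$, and conclude from Theorem \ref{strongconv00} plus uniform integrability. However, there are two connected gaps in the middle. First, your splitting of the operator term is not supported by the hypotheses: the bracket $A(u_{\Phi_n},\hat{u}_{\Phi_n})-A(u_{\Phi},\hat{u}_{\Phi_n})$ is a difference in the \emph{first} slot only, and none of ({\bf C1})--({\bf C5}) controls such a difference --- ({\bf C4}) is stated for the combined expression $A(t,v_1,v_2)-A(t,v_3,v_3)$, while ({\bf C5}) is a growth bound on $\|A(t,v,v_1)\|_{V'}$ and cannot estimate a difference of operator values. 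The only move available is to apply ({\bf C4}) directly to the full difference $\langle A(u_{\Phi},\hat{u}_{\Phi_n})-A(u_{\Phi_n},u_{\Phi_n}),\hat{u}_{\Phi_n}-u_{\Phi_n}\rangle$ with the forced assignment $v_1=u_{\Phi}$, $v_2=\hat{u}_{\Phi_n}$, $v_3=u_{\Phi_n}$; this is exactly what the paper does to arrive at (\ref{strongconv01}).

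Second, and this is the consequential error: that application of ({\bf C4}) produces the weight $\rho(v_1)=\rho(u_{\Phi})$, evaluated at the \emph{limit} solution, not $\rho(u_{\Phi_n})$. Your localization $\tau_M=\mathcal{T}_M^{u_{\Phi_n}}$ bounds $\sup_t\|u_{\Phi_n}\|^2$ and $\int_0^t\|u_{\Phi_n}\|_V^2\,ds$, which says nothing about $\int_0^{\tau_M}\rho(u_{\Phi}(s))\,ds$; hence your claimed Gronwall estimate with a constant $C(M)$ independent of $n$ does not follow, and the uniform-in-$n$ removal of the localization that you build on top of it collapses with it. The paper instead localizes at $\mathcal{T}_M:=\mathcal{T}_M^{u_{\Phi}}$, which is independent of $n$, and handles the $\rho$-weighted zeroth-order terms not by Gronwall but by the integrating factor
$$
e(t)=\exp\Bigl(\int_0^t\bigl(-2c_4-\tfrac{2\alpha}{c_5}-2L-2c_1\rho(u_{\Phi}(s))\bigr)ds\Bigr)
$$
inside It\^o's formula: the term $\int_0^{\cdot}e'(s)\|w_n(s)\|^2ds$ cancels all zeroth-order contributions including the $\rho(u_{\Phi})$-weighted one, and on $[0,\mathcal{T}_M]$ the factor $e$ is bounded below by a constant depending only on $M$, so the surviving right-hand side consists solely of $\|\hat{u}_{\Phi_{n_k}}-u_{\Phi}\|_V^2$-terms that vanish by Theorem \ref{strongconv00}. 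With $n$-independent stopping times, delocalization is also far simpler than your scheme: by the pathwise bounds (\ref{estiuphi}) one has $\mathcal{T}_M=T$ a.s. once $M$ exceeds the random constant $k_2(\omega)$, and the uniform moment bounds of Remark \ref{estimt1} and (\ref{estimat02phi}) supply the uniform integrability --- this last ingredient of your proposal is the right idea, but it is attached to the wrong stopping times.
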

\begin{proof} For the sake of convenience, we use the abbreviations, $u:=u_{\Phi}$ and $\mathcal{T}_M:= \mathcal{T}_M^u$ for $M=1,2,\dots$.\\
Let $e(t):= exp(\dint_0^t-2c_4-\frac{2\alpha}{c_5}-2L-2c_1\rho(u(s))ds).$ As a result of the It\^o formula, we get
$$\begin{array}{rl}
e(\mathcal{T}_M)\|&\hat{u}_{\Phi_n}(\mathcal{T}_M)-u_{\Phi_n}(\mathcal{T}_M)\|^2=\dint_0^{\mathcal{T}_M}e(s)\|\Xi(s,u(s))-\Xi(s,u_{\Phi_n}(s)\|_2^2ds+\\
&+2\dint_0^{\mathcal{T}_M}e(s)(\Phi_n(s,u(s))-\Phi_n(s,u_{\Phi_n}(s)),(\hat{u}_{\Phi_n}-u_{\Phi_n})(s))ds+\\
&+2\dint_0^{\mathcal{T}_M}e(s)\left\langle A(u(s),\hat{u}_{\Phi_n}(s))-A (u_{\Phi_n}(s),u_{\Phi_n}(s)),(\hat{u}_{\Phi_n}-u_{\Phi_n})(s)\right\rangle ds+\\
&+2\dint_0^{\mathcal{T}_M}e(s)((\hat{u}_{\Phi_n}-u_{\Phi_n})(s),\Xi(s,u(s))-\Xi(s,u_{\Phi_n}(s))dW(s))+\\
&+\dint_0^{\mathcal{T}_M}e'(s)\|(\hat{u}_{\Phi_n}-u_{\Phi_n})(s)\|^2ds.
\end{array}
$$
Using the properties of $A,$ ({\bf C4}), $\Phi_n$ and $\Xi,$ we get
\begin{equation}\label{strongconv01}\begin{array}{rl}
\mathbb{E}(e(\mathcal{T}_M)\|\hat{u}_{\Phi_n}(\mathcal{T}_M)-u_{\Phi_n}(\mathcal{T}_M)\|^2)+c_5&\mathbb{E}(\dint_0^{\mathcal{T}_M}e(s)\|(\hat{u}_{\Phi_n}-u_{\Phi_n})(s)\|^2_V ds)\leq\\
\leq \mathbb{E}(\dint_0^{\mathcal{T}_M}e'(s)\|(\hat{u}_{\Phi_n}-u_{\Phi_n})(s)\|^2ds)+2c_1&\mathbb{E}(\dint_0^Te(s)\rho(u(s))\|(\hat{u}_{\Phi_n}-u_{\Phi_n})(s)\|^2ds)+\\
+2c_2\mathbb{E}(\dint_0^Te(s)\|(\hat{u}_{\Phi_n}-u)(s)\|_V^2ds)+2c_3&\mathbb{E}(\dint_0^Te(s)\rho(u(s))\|(\hat{u}_{\Phi_n}-u)(s)\|_V^2ds)+\\
+\frac{\alpha}{c_5}\mathbb{E}(\dint_0^Te(s)\|(u-u_{\Phi_n})(s)\|^2ds)
+L\mathbb{E}&(\dint_0^{\mathcal{T}_M}e(s)\|(u-u_{\Phi_n})(s)\|^2ds)+\\
+2c_4\mathbb{E}(\dint_0^Te(s)\|(\hat{u}_{\Phi_n}-u_{\Phi_n})(s)\|^2ds)&.
\end{array}
\end{equation} 
From Theorem \ref{strongconv00}, we can get a subsequence  $\left\{\hat{u}_{\Phi_{n_k}}\right\}_{k=1}^{\infty}$ that converges to $ u$ a.e. $(\omega,t)\in\Omega\times\mathbf{T}.$ Thus, from (\ref{strongconv01}), we obtain
$$\begin{array}{rl}
E((e(\mathcal{T}_M)\|\hat{u}_{\Phi_{n_k}}(\mathcal{T}_M)-u_{\Phi_{n_k}}(\mathcal{T}_M)\|^2)+c_5E(\dint_0^{\mathcal{T}_M}e(s)\|(\hat{u}_{\Phi_{n_k}}-u_{\Phi_{n_k}})(s)\|^2_V ds)\leq&\\
\leq 2c_3E(\dint_0^Te(s)\rho(u(s))\|(\hat{u}_{\Phi_{n_k}}-u)(s)\|_V^2ds)+&\\
+(2c_2+2L+\frac{2\alpha}{c_5})E(\dint_0^Te(s)\|(\hat{u}_{\Phi_{n_k}}-u)(s)\|_V^2ds).&\\
\end{array}
$$
From this, Theorems \ref{strongconv00} and  the triangle inequality, we obtain
$$
\lim_{k\rightarrow \infty}E(\dint_0^{\mathcal{T}_M}\|(u_{\Phi}-u_{\Phi_{n_k}})(s)\|_1^2ds)=\lim_{k\rightarrow \infty}E(\|(u_{\Phi}-u_{\Phi_{n_k}})(\mathcal{T}_M)\|^2)=0,
$$
which implies the desired conclusion.
\end{proof}\eop\\
\indent Finally, we are in a position  to formulate our main result.
\begin{theorem}\label{theomain}Under the assumptions of Theorem \ref{exsth}, if, moreover, the conditions ({\bf C1})-({\bf C5})) are satisfied, then there exists an optimal control for the problem $(\mathcal{P})$.
\end{theorem}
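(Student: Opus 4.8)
The plan is to carry out the direct method of the calculus of variations in the stochastic framework: pick a minimizing sequence of admissible controls, extract a weakly convergent subsequence via Lemma \ref{mainlem}, promote this to strong convergence of the corresponding states through Theorem \ref{maintheo}, and then pass to the limit in the cost functional (\ref{cost}) using the weak sequential lower semicontinuity assumed for $\mathcal{L}$, $\mathcal{K}$ and $\mathcal{H}$.

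First I would set $d:=\inf\{\mathcal{J}(\Phi):\Phi\in\mathcal{U}\}$. Since $\mathcal{L},\mathcal{K},\mathcal{H}$ are nonnegative and $\mathcal{U}$ is nonempty (it contains, e.g., $\Phi\equiv0$), we have $d\ge0$, and in the nondegenerate case $d<\infty$; choose $\{\Phi_n\}\subset\mathcal{U}$ with $\mathcal{J}(\Phi_n)\to d$. Lemma \ref{mainlem} yields a subsequence (relabelled $\Phi_n$) and a limit $\Phi\in\mathcal{U}$ satisfying (\ref{weakconv}), and Theorem \ref{maintheo} then furnishes a further subsequence $n_k$ along which
$$\lim_{k\to\infty}\mathbb{E}\Bigl(\int_0^T\|(u_\Phi-u_{\Phi_{n_k}})(s)\|_1^2\,ds\Bigr)=0,\qquad \lim_{k\to\infty}\mathbb{E}\bigl(\|(u_\Phi-u_{\Phi_{n_k}})(T)\|^2\bigr)=0.$$
From these $L^1(\Omega)$-convergences I would extract, by a standard subsequence argument (not relabelled), the three pathwise modes I need at once: $u_{\Phi_{n_k}}(\omega,\cdot)\to u_\Phi(\omega,\cdot)$ strongly in $L^2(\mathbb{T};H^1_0(D))$ for a.e.\ $\omega$; $\|u_{\Phi_{n_k}}(\omega,s)-u_\Phi(\omega,s)\|\to0$ for a.e.\ $(\omega,s)$ (using $\|\cdot\|\le c_{HV}^{1/2}\|\cdot\|_V$); and $u_{\Phi_{n_k}}(\omega,T)\to u_\Phi(\omega,T)$ in $H$ for a.e.\ $\omega$.

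The state and terminal costs are then immediate. For the running cost, pathwise strong (hence weak) convergence in $L^2(\mathbb{T};H^1_0(D))$ and the weak lower semicontinuity of $u\mapsto\int_0^T\mathcal{L}(s,u(s))\,ds$ give $\int_0^T\mathcal{L}(s,u_\Phi(\omega,s))\,ds\le\liminf_k\int_0^T\mathcal{L}(s,u_{\Phi_{n_k}}(\omega,s))\,ds$ a.s., and Fatou's lemma (valid since $\mathcal{L}\ge0$) transfers this to expectations. The terminal cost is handled identically using the $H$-convergence at time $T$ and the weak lower semicontinuity of $\mathcal{H}$.

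The main obstacle is the control cost $\mathcal{K}$, evaluated at $\Phi_{n_k}(s,u_{\Phi_{n_k}}(s))$, where both the map and its argument vary with $k$. I would decompose this as $\Phi_{n_k}(s,u_{\Phi_{n_k}}(s))=[\Phi_{n_k}(s,u_{\Phi_{n_k}}(s))-\Phi_{n_k}(s,u_\Phi(s))]+\Phi_{n_k}(s,u_\Phi(s))$. The bracketed term tends to $0$ strongly in $H$ for a.e.\ $(\omega,s)$, because the Lipschitz estimate (\ref{condcopt3}) gives $\|\Phi_{n_k}(s,u_{\Phi_{n_k}}(s))-\Phi_{n_k}(s,u_\Phi(s))\|^2\le\alpha\|u_{\Phi_{n_k}}(s)-u_\Phi(s)\|^2\to0$; meanwhile Lemma \ref{mainlem} gives $\Phi_{n_k}(s,u_\Phi(\omega,s))\rightharpoonup\Phi(s,u_\Phi(\omega,s))$ weakly in $H$ for every fixed $(\omega,s)$. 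A strongly null term plus a weakly convergent one converges weakly, so $\Phi_{n_k}(s,u_{\Phi_{n_k}}(\omega,s))\rightharpoonup\Phi(s,u_\Phi(\omega,s))$ in $L^2(D)$ for a.e.\ $(\omega,s)$; the weak lower semicontinuity of $\mathcal{K}$ then yields $\mathcal{K}(\Phi(s,u_\Phi(\omega,s)))\le\liminf_k\mathcal{K}(\Phi_{n_k}(s,u_{\Phi_{n_k}}(\omega,s)))$ pointwise, and Fatou's lemma upgrades it to the expectation. Adding the three lower-semicontinuity inequalities gives $\mathcal{J}(\Phi)\le\liminf_k\mathcal{J}(\Phi_{n_k})=d$; since $\Phi\in\mathcal{U}$ forces $\mathcal{J}(\Phi)\ge d$, we conclude $\mathcal{J}(\Phi)=d$, so $\Phi$ is an optimal control.
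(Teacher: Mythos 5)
Your proposal is correct and follows essentially the same route as the paper: minimizing sequence, weak convergence of controls via Lemma \ref{mainlem}, strong convergence of states via Theorem \ref{maintheo}, and weak sequential lower semicontinuity of $\mathcal{L}$, $\mathcal{K}$, $\mathcal{H}$ to pass to the limit in the cost. The only difference is one of detail, to your credit: the decomposition $\Phi_{n_k}(s,u_{\Phi_{n_k}})=[\Phi_{n_k}(s,u_{\Phi_{n_k}})-\Phi_{n_k}(s,u_{\Phi})]+\Phi_{n_k}(s,u_{\Phi})$ together with the Lipschitz bound (\ref{condcopt3}) and Fatou's lemma makes rigorous the combined convergence $\lim_{k\to\infty}\bigl(\Phi_{n_k}(t,u_{\Phi_{n_k}}),y\bigr)=\bigl(\Phi(t,u_{\Phi}),y\bigr)$, which the paper merely asserts as a consequence of its two auxiliary results.
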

\begin{proof} Let $\left\{\Phi_n\right\}$ be a minimizing sequence for the problem ($\mathcal{P}$). We apply Lemma (\ref{mainlem}) and Theorem (\ref{maintheo}) to this sequence. Thus, there exists a subsequence $\left\{\Phi_{n_k}\right\}$ of $\left\{\Phi_n\right\}$ and $\Phi\in\mathcal{U}$ such that, for all $t\in\mathbb{T},$ $x,y\in L^2(D)$ and a.s. $\omega\in\Omega,$ the following hold:
$$
\lim_{k\rightarrow\infty}\left(\Phi_{n_k}(t,u_{\Phi_{n_k}}),y\right)=\left(\Phi(t,u_{\Phi}),y\right)
$$ 
and
$$
\lim_{k\rightarrow\infty}\dint_0^T\|(u_{\Phi_{n_k}}-u_{\phi})(s)\|_V^2ds=\lim_{k\rightarrow\infty}\|(u_{\Phi_{n_k}}-u_{\phi})(T)\|^2=0.
$$
\indent From Theorem (\ref{maintheo}) and the weak sequentially lower semicontinuous properties of $\mathcal{L},$ $\mathcal{K}$ and $\mathcal{H},$ we get
$$
\mathcal{J}(\Phi)\leq\lim \inf_{k\rightarrow\infty}\mathcal{J}(\Phi_{n_k}).
$$
Since $\left\{\Phi_n\right\}$ is a minimizing sequence for the problem ($\mathcal{P}$),
$\mathcal{J}(\Phi)=\min_{\lambda\in\mathcal{U}}\mathcal{J}(\lambda)$ and 
thus $\Phi\in\mathcal{U}$ is an optimal feedback control for problem ($\mathcal{P}$). 
\end{proof}\eop
\section{Examples}
\begin{example}Let $(H,V,V')$ be a Gelfand triple. The main result can be applied to the initial value problem involving the linear stochastic evolution equation:
\begin{equation}\label{monclas}
du(t)=(\mathcal{A}(t,u(t))+\Phi(t,u(t)))dt+\Xi(t,u(t))dW(t), \ \ u(0)=u_0
\end{equation}
where $\mathcal{A}:\mathbb{T}\times V\times\Omega\rightarrow V'$ is a linear operator,  $\Phi$ is the control and $u_0\in H$. Furthermore, we will suppose that there are constants $\alpha_1,$ $\beta_1$ and $\gamma_1$ such that a.e. $(t,\omega)\in\mathbb{T}\times\Omega$ and $v_1,v_2\in V$:
\begin{enumerate}
\item[{\bf  1)}] $|\langle \mathcal{A}(t,v_1),v_2\rangle\leq \alpha_1\|v_1\|_V\|v_2\|_V$
\item[{\bf  2)}]   $|\langle \mathcal{A}(t,v_1),v_1\rangle\leq -\beta_1\|v_1\|^2_V+\gamma_1 \|v_1\|^2.$\\
Then, there is an optimal  control $\Phi$ which minimizes the cost functional $\mathcal{J}$ given by (\ref{cost}).
\end{enumerate}
\end{example}
\begin{proof} Under the conditions (1) , (2) (above), (\ref{condcopt2}), (\ref{condcopt3}) and  ({\bf C1}) it is not hard to prove that the coefficients of the equation  (\ref{monclas}) satisfy the conditions ({\bf A1}), ({\bf A2}) with $K=2\gamma_1+\frac{2L}{\theta_1}+\alpha$ and $\rho(v_2)=0$ and ({\bf A3}) with $\theta=\beta_1,$ $K=\gamma_1$ $f\equiv1$, and ({\bf A4}) with $\beta=2.$ Thus from the Theorem \ref{exsth} there is a solution $u_{\Phi}$ to the equation (\ref{monclas}).\\     
Taking $A(t,u,v)=\mathcal{A}(v)$ we have that the coefficients of the equation  (\ref{monclas}) satisfy ({\bf C2}) with $K_1=\beta_1$, $J_1=0$, ({\bf C3}) with $\theta_1=\beta_1$, ({\bf C4}) with $c_1=c_2=c_3=0,$ $c_5=\beta_1,$ $c_4=\gamma_1$, and ({\bf C5}) with $\theta_2=\alpha_1$, $p_3=p_4=p_5=0$. So that the claim follows from Theorem \ref{theomain}.
\end{proof}\eop\\
The following example shows that the Theorem \ref{theomain} can be applied to some monotone controlled SPDEs. 
\begin{example} (Stochastic Reaction-Diffusion) Let $\mathcal{O}$ be a bounded domain in $\mathbb{R}^d$ with smooth boundary. We can take $H=L^2(\mathcal{O}),$ $V=H^1_0(\mathcal{O})$ and $V'=H^{-1}(\mathcal{O})$ with $p\in[1,+\infty)$ such that $p=\frac{2d}{d-2}$ and $d\geq3.$ Consider the following initial-boundary value problem involving a controlled stochastic reaction diffusion:
\begin{equation}\label{rd}
\left\{\begin{array}{rll}
du(t)=&\!(\Delta u(t)-u(t)|u(t)|^{q-2}+\Phi(t,u))dt+g(t,u)dW(t),\  t\in \left]0,T\right[;\\
u(x,&\!\!\!0)=u_0(x)\textmd{ on }\mathcal{O}\textmd{ and }u(x,t)=0\textmd{ on }\partial \mathcal{O}\times\left]0,T\right[
\end{array}\right.
\end{equation}
where $W$ is a Wiener process in $L^2(\mathcal{O}),$ $q\in[2,p]$ and $\Phi$ is the control.\\
Then, there is an optimal  control $\Phi$ which minimizes the cost functional $\mathcal{J}$ given by (\ref{cost}).
\end{example}
\begin{proof}To prove the claim we use the Theorem \ref{theomain} with:
$$
A(t,u,v)=\Delta v-v|v|^{q-2}.
$$
To demonstrate that $A$ satisfies the conditions ({\bf A1}), ({\bf A2}) (with $K=L$ and $\rho=0$),  ({\bf A3}) and ({\bf A4})  see \cite{PR} example 4.1.5 so that from the Theorem\ref{exsth} there is a unique solution for the equation (\ref{rd}). 
On the other hand, $A$ satisfies the condition ({\bf C2}) with $K_1=1.$ In fact
$$
\langle A(t,v,v_1),v_1\rangle=\langle \Delta v_1-v_1|v_1|^{q-2},v_1\rangle\leq-\|v_1\|^2_V.
$$
To demonstrate that $A$ satisfies the condition ({\bf C3}) with $\theta_1=1,$ we observe that
$$\begin{array}{rl}
\langle A(t,v,v_1)-A(t,v,v_2),v_1-v_2\rangle&=\langle \Delta (v_1-v_2),v_1-v_2\rangle+\\
&-\langle v_1|v_1|^{q-2}-v_2|v_2|^{q-2},v_1-v_2\rangle\leq\\
&\leq-\|v_1-v_2\|_V^2
\end{array}
$$
because the map $u\mapsto -u|u|^{q-2}$ satisfies a local monotonicity condition with $L=0$ and $\rho=0.$\\
Analogously, we can prove that the condition ({\bf C4}) is satisfied with $c_5=1,$ $c_1=c_3=0$ and $c_2=c_4=1$, in a similar manner we can demonstrate that ({\bf C5}) is satisfied to suitable constants.
\end{proof}\eop\\
\begin{remark}We wish to remark that although the equation (\ref{rd}) is well known, this is the first time that the problem of the existence of optimal control to this equation is studied. 
\end{remark}
Now we will consider the following initial-boundary value problem involving a controlled SPDE:
\begin{equation}\label{ksa}
\left\{\begin{array}{rll}
du(t)=&\!(a(\int_Dudx)\Delta u+\Phi(t,u))dt+g(t,u)dW(t)\textmd{ on } t\in \left]0,T\right[,\\
u(x,&\!\!\!0)=u_0(x)\textmd{ on }D\textmd{ and }u(x,t)=0\textmd{ on }\partial D\times\left]0,T\right[
\end{array}\right.
\end{equation}
where $D$ is a bounded open subset of $\mathbb{R}^n$ with  smooth boundary $\partial D,$  $n\geq1,$ $a=a(s)$ is a continuous function with Lipschitz constant $L$ such that $0<p\leq a(s)\leq P$ where $p$ and $P$ are constants, $W$ is a Wiener process in $L^2(D)$ and $\Phi\in\mathcal{U}$ is a control.\\
In this case the Gelfand triple
$$
V\subset H=H'\subset V',
$$
where $V=H^1_0(D)$ and $H=L^2(D).$ 
\begin{example} (Stochastic nonlocal parabolic equation) There is an optimal  control $\Phi$ which minimizes the cost functional $\mathcal{J}$ given by (\ref{cost}) to the equation (\ref{ksa}).
\end{example}
\proof In this example we will consider $A(t,u,v)=(a(\int_Dudx)\Delta v$ for $t\in\mathbb{T},$ $u,v\in V.$ First, we will verify that if $u_0\in L^4(\Omega, H)$ then (\ref{ksa}) has a unique solution $u=u_{\Phi}.$ In fact, the hemicontinuity ({\bf A1}) is a consequence of the properties of $a$.\\
About ({\bf A2}), we have
$$\begin{array}{rl}
\langle A(t,u,u)-A(t,v,v),u-v\rangle\leq-\langle a(\int_Du(x)dx)&\nabla u-a(\int_Dv(x)dx)\nabla v,\nabla(u-v)\rangle\leq\\
\leq-\langle a(\int_Du(t,x)dx)(\nabla u-&\nabla v),\nabla(u-v)\rangle+\\
-\langle (a(\int_Du(t,x)dx)-&a(\int_Dv(t,x)dx))\nabla v,\nabla(u-v)\rangle,
\end{array}
$$
then
$$
\langle A(t,u,u)-A(t,v,v),u-v\rangle+ \frac{p}{2}\|\nabla(u-v)\|^2\leq\frac{C(D)L_1}{2p}\|u-v\|^2\|v\|_V^2,\\
$$
thus
$$\begin{array}{rll}
2(A(t,u)-A(t,v),u-v\rangle+&2\langle\Phi(t,u)-\Phi(t,v),u-v\rangle+&\|\Xi(t,u)-\Xi(t,v)\|_2^2\leq\\
\leq\frac{C(D)L_1}{p}\|v\|^2_V\|v-u\|^2&+L\|v-u\|^2+\frac{2\alpha}{p}\|v-u\|^2.\\
\end{array} 
$$
Hence, we have the local monoticity ({\bf A2}) with $\rho(v)=\frac{C(D)L_1}{p}\|v\|^2_V$ where $C(D)=1_D$. \\
We proceed to demonstrate ({\bf A4}), we have
$$
|\langle A(t,u),w\rangle|^2\leq P\|u\|^2_V\textmd{ for } \|w\|_V\leq1
$$
so we have ({\bf A4}) with $\beta=2$. Similarly, ({\bf A3}) is verified.
Thus, from Theorem (\ref{exsth}) there is a unique solution for the equation (\ref{ksa}).\\
The properties of $a$ provide ({\bf C2}) with $K_1=p$ and $J_1=0$.  Using the properties of $a$ we obtain ({\bf C3}) with $\theta_1=p$. Now, we proceed to demonstrate ({\bf C4}.)   In fact, since
$$
\begin{array}{rl}
\langle A(v_1,v_2&)-A(v_3,v_3),(v_2-v_3)\rangle=\\
&=\left\langle A(v_1,v_2)-A(v_3,v_2),(v_2-v_3)\right\rangle+\\
&+\left\langle A(v_3,v_2-v_3),(v_2-v_3)\right\rangle=\\
&\!\!\!-\left((a(\dint_{D}v_1(x)dx-a(\dint_Dv_3(x)dx))\nabla v_2-\nabla v_1,\nabla(v_2-v_3)\right)+\\
&-\left((a(\dint_{D}v_1(x)dx-a(\dint_Dv_3(x)dx))\nabla v_1,\nabla(v_2-v_3)\right)+\\
&+\left\langle A(v_3,v_2-v_3),(v_2-v_3)\right\rangle\leq\\
&\leq
-\frac{3p}{4}\|v_2-v_3\|^2_V+\frac{2P}{p}\|v_2-v_1\|^2_V+\\
&+\frac{4L_1^2C(D)}{p}\|v_1\|^2_V\|v_2-v_3\|^2+\frac{4L_1^2C(D)}{p}\|v_1\|^2_V\|v_1-v_2\|^2_V
\end{array}
$$
thus ({\bf C4}) is satisfied with $c_1=4L_1,$ $c_2=\frac{2P}{p},$ $c_3=4L_1,$ $c_5=\frac{3p}{4}$ and $c_4=0.$
Using the properties of $a$ we obtain ({\bf C5}) with $\theta_2=P$ and $p_3=p_4=p_5=0,$ 
and the claim follows from Theorem \ref{theomain}. 
\eop\\
\begin{remark}We wish to remark that the the problem of the existence of optimal control to the equation (\ref{ksa}) showing that the equation (\ref{ksa}) is a particular case of a monotone locally equation is studied for the first time in the present work and for this reason we need to demonstrate that in fact the coefficients satisfy the conditions  ({\bf A1})-({\bf A4}).   
\end{remark}
Let $D\subset \mathbb{R}^n$ be an open bounded domain with smooth boundary. 
\begin{lemma}\label{lemmsilin}Consider the Gelfand triple
$$
V:=H^1_0(D)\subset H:=L^2(D)\subset V':=H^{-1}(D)
$$
and the operator
$$
A(u)=\Delta u +\sum_{i=1}^{d} f_i(u)D_i u,
$$
where $f_i,$ for $i=1,\ldots,d$ are bounded Lipschitz functions on $\mathbb{R}.$\\
(1) If $d<3,$ there exists a constant $K_2$ such that
\begin{equation}\label{lady}
2\langle A(u)-A(v),u-v\rangle\leq-\|u-v\|^2_V+(K_2+K_2\|v\|_V^2)\|u-v\|^2_H, \, u,\, v\in V.
\end{equation}
(2) If $d=3,$ there exists a constant $K_3$ such that
$$
2\langle A(u)-A(v),u-v\rangle\leq-\|u-v\|^2_V+(K_3+K_3\|v\|_V^4)\|u-v\|^2_H, \, u,\, v\in V.
$$
(3) If $f_i$ are independent of $u$ for $i=1,\ldots,d$ , i.e.
$$
A(u)=\Delta u +\sum_{i=1}^{d} f_iD_i u,
$$
then for $d\geq1$ we have
$$
2\langle A(u)-A(v),u-v\rangle\leq-\|u-v\|^2_V+K_4\|u-v\|^2_H, \, u,\, v\in V.
$$
where $K_4$ is a constant.
\end{lemma}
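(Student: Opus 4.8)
The plan is to set $w:=u-v$ and split $A(u)-A(v)$ into its principal part $\Delta w$ and the first-order transport part. On $V=H^1_0(D)$ one may take $\|w\|_V^2=\|\nabla w\|_{L^2}^2$ by Poincar\'e, so $2\langle\Delta w,w\rangle=-2\|\nabla w\|_{L^2}^2=-2\|w\|_V^2$; this negative quantity is the reserve from which the whole estimate is built. Everything then reduces to bounding the transport term
\[
T:=\sum_{i=1}^{d}\langle f_i(u)D_i u-f_i(v)D_i v,\,w\rangle
\]
and absorbing at most $\|w\|_V^2$ of it into the reserve so that the surviving coefficient of $\|w\|_V^2$ is $-1$. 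Part (3) is then immediate: when the $f_i$ are constants, $T=\sum_i f_i\langle D_i w,w\rangle=\frac12\sum_i f_i\int_D D_i(w^2)\,dx=0$ because $w\in H^1_0(D)$, so $2\langle A(u)-A(v),w\rangle=-2\|w\|_V^2$ and any $K_4\ge 0$ works.

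For parts (1) and (2) I would use, for each $i$, the decomposition
\[
f_i(u)D_i u-f_i(v)D_i v=f_i(u)\,D_i w+\bigl(f_i(u)-f_i(v)\bigr)D_i v,
\]
and bound the two resulting contributions to $T$ separately. The first is handled by boundedness: $|f_i(u)|\le M$ gives $|\langle f_i(u)D_i w,w\rangle|\le M\|D_i w\|_{L^2}\|w\|_{L^2}$, and after summing in $i$ a term $\le M\sqrt{d}\,\|w\|_V\|w\|_H$, which a weighted Young inequality absorbs into $\varepsilon\|w\|_V^2+C_\varepsilon\|w\|_H^2$. The second is handled by the Lipschitz bound $|f_i(u)-f_i(v)|\le L|w|$ together with H\"older's inequality, giving $\le L\sum_i\|D_i v\|_{L^2}\|w\|_{L^4}^2\le C L\|v\|_V\|w\|_{L^4}^2$; this is the contribution in which the dimension enters.

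The crux is the factor $\|w\|_{L^4}^2$, which I would interpolate by Gagliardo--Nirenberg as $\|w\|_{L^4}^2\le C\|w\|_V^{d/2}\|w\|_H^{2-d/2}$ (the exponent being $\theta=d/4$ in $\|w\|_{L^4}\le C\|w\|_V^{\theta}\|w\|_H^{1-\theta}$, valid for $d\le 3$ on a smooth bounded domain). For $d=2$ this reads $C L\|v\|_V\|w\|_V\|w\|_H$, and Young's inequality with conjugate exponents $(2,2)$ yields $\varepsilon\|w\|_V^2+C_\varepsilon L^2\|v\|_V^2\|w\|_H^2$; for $d=1$ the same scheme with exponents $(4,\frac43)$ yields $\varepsilon\|w\|_V^2+C_\varepsilon\|v\|_V^{4/3}\|w\|_H^2$ and one absorbs $\|v\|_V^{4/3}\le 1+\|v\|_V^2$; both give the coefficient $(K_2+K_2\|v\|_V^2)$ of part (1). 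For $d=3$ the interpolation is $\|w\|_{L^4}^2\le C\|w\|_V^{3/2}\|w\|_H^{1/2}$, and Young with exponents $(\frac43,4)$ yields $\varepsilon\|w\|_V^2+C_\varepsilon L^4\|v\|_V^4\|w\|_H^2$, the coefficient $(K_3+K_3\|v\|_V^4)$ of part (2). Choosing the Young parameters so that the two extracted multiples of $\|w\|_V^2$ sum to at most $1$, the reserve $-2\|w\|_V^2$ leaves at most $-\|w\|_V^2$, which is the asserted leading term.

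I expect the difficulty to be bookkeeping rather than conceptual. The delicate points are: genuinely using boundedness (not merely Lipschitzness) for the first contribution, since integrating by parts there would leave an uncontrolled $\|\nabla u\|_{L^2}$; tuning the Young parameters with a small weight so that the total amount of $\|w\|_V^2$ absorbed stays below the available $2\|w\|_V^2$ while the surviving power of $\|v\|_V$ is exactly $2$ for $d<3$ and $4$ for $d=3$; and invoking the correct Gagliardo--Nirenberg exponent, whose breakdown at $d=4$ ($\theta=1$, no $\|w\|_H$ factor) explains why only $d\le 3$ is treated. The constant-coefficient case (3) is the degenerate instance in which the second contribution vanishes identically and no interpolation is needed. (One could alternatively note that boundedness makes the transport term a divergence, $\sum_i f_i(u)D_i u=\nabla\cdot G(u)$ with $G_i'=f_i$, which yields a single global bound of type (3) for every $d$; but the direct estimate above is what produces the dimension-graded forms (1)--(2) asserted here.)
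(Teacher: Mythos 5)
Your proposal is correct, but it cannot be judged ``the same approach as the paper'' for the simple reason that the paper contains no proof of this lemma at all: its proof is the single line ``See Lemma 3.1 of \cite{LR}''. What you have written is a self-contained proof of the cited result, and your route is essentially the standard one that the cited source itself follows: isolate the reserve $2\langle\Delta w,w\rangle=-2\|w\|_V^2$ for $w=u-v$, split the transport difference as $f_i(u)D_iw+(f_i(u)-f_i(v))D_iv$, use boundedness of $f_i$ on the first piece and Lipschitzness plus H\"older on the second, and close with Gagliardo--Nirenberg interpolation ($\theta=d/4$, i.e. $\|w\|_{L^4}^2\le C\|w\|_V^{d/2}\|w\|_H^{2-d/2}$, which is exactly why $d\le 3$ is the natural range) and weighted Young inequalities. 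Your exponent bookkeeping is right in all cases: conjugate pair $(2,2)$ for $d=2$, pair $(4,\tfrac43)$ for $d=1$ followed by $\|v\|_V^{4/3}\le 1+\|v\|_V^2$, pair $(\tfrac43,4)$ for $d=3$ producing the $\|v\|_V^4$ weight, and the vanishing of the transport term via the divergence theorem in part (3). Your closing parenthetical observation is also correct and is worth emphasizing: since the $f_i$ are bounded, one can write $\sum_i f_i(u)D_iu=\nabla\cdot G(u)$ with $G_i(r)=\int_0^r f_i(s)\,ds$, so that $G_i(u)\in H^1_0(D)$, $|G_i(u)-G_i(v)|\le M|w|$ pointwise, and integration by parts gives $|T|\le M\sqrt{d}\,\|w\|_H\|w\|_V$ in every dimension; this yields a bound of type (3) for all $d\ge1$, strictly stronger than the graded statements (1)--(2). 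In other words, under the stated hypotheses (bounded \emph{and} Lipschitz $f_i$) the dimension-graded weights $\|v\|_V^2$ and $\|v\|_V^4$ are an artifact of the direct estimate rather than a necessity, though they are the form needed to match the growth condition on $\rho$ in Theorem \ref{exsth}.
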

\proof See Lemma 3.1 of \cite{LR}\eop
\begin{example}( Stochastic semi-linear equations). Let $d\leq3$ and consider the initial value problem involving the controlled semi-linear stochastic equation
\begin{equation}\label{smln}
du(t)=(\Delta u(t) +\sum_{i=1}^{d} f_i(u(t))D_i u(t)+\Phi(u(t)))dt+\Xi(u(t))dW(t),\ \ u(0)=u_0
\end{equation}
where $W(t)$ is a Wiener process on $L^2(D)$, $\Phi$ is the control and $f_i$ are bounded Lipschitz functions on $\mathbb{R}$ for $i=1,\ldots,d$. Suppose that $|f_i(x)|\leq J<1.$ There is an optimal  control $\Phi$ for the problem ($\mathcal{P}$).
\end{example} 
\proof We can suppose that all $f_i$ with $i=1,\ldots,d$ have the same Lipschitz constant $L_1$.\\
We define the map
$$
A(u,v)=\Delta v+\sum_{i=1}^{d} f_i(u)D_i v, \ \ u\in V.
$$
The hemicontinuity ({\bf A1}) follows from the continuity of $f$ and $\Xi$. We give the proof of ({\bf A2})-({\bf A4}) only for the case $d=3$; the  case $1\leq d<3$ is similar.\\
Therefore, by Lemma \ref{lemmsilin}
$$
2\langle A(u)-A(v),u-v\rangle+2\langle \Phi(u)-\Phi(v),u-v\rangle\leq -\frac{1}{2}\|u-v\|^2_V+({\alpha}{2}+K_2\|v\|_V^4)\|u-v\|^2_H
$$ 
for $u,v\in V.$ Hence,  ({\bf A2}) and ({\bf A3}) are satisfied with $\alpha=2$ and $\rho(v)=K_2\|v\|_V^4$ respectively.
We proceed to demonstrate ({\bf A4}) since we have that
$$
|\langle Au,v\rangle|^2\leq C\|u\|^2_V\|v\|^2_V
$$
so we have ({\bf A4}) with $\beta=2.$ Thus, from Theorem (\ref{exsth}) there is a unique solution for the equation (\ref{smln}).\\
Now we proceed to verify  ({\bf C2}) - ({\bf C5}). Using the properties of $f_i$ we have that ({\bf C2}) is satisfied with $K_1=1-J$ and $J_1=0$.  Since 
$$
\langle A(v,v_1)-A(v,v_2),v_1-v_2)\rangle\leq-(1-J)\|v_1-v_2\|^2_V
$$
we have that ({\bf C3}) is satisfied with $\theta_1=1-J$.\\
Using the properties of $f_i$ we obtain the following inequality
\begin{equation}\label{inqsln}\begin{array}{rl}
\langle A(v_1,v_2)-A(v_3,v_3),v_2-v_3)\rangle\leq-\|v_2-v_3\|_V^2+&\\
\sum_{i=1}^d\int_D (f_i(v_1)-f_i(v_3))(D_iv_2-D_iv_1)(v_2-v_3)dx+&\\
+\sum_{i=1}^d\int_D(f_i(v_1)-f_i(v_2))D_i(v_1)(v_2-v_3)dx+&\\
+\sum_{i=1}^d\int_D(f_i(v_2)-f_i(v_3))D_i(v_1)(v_2-v_3)dx+&\\
+\sum_{i=1}^d\int_Df_i(v_3)(D_iv_2-D_iv_3)(v_2-v_3)dx\leq&\\
\leq-\|v_2-v_3\|_V^2+2J\|v_2-v_3\|\|v_2-v_1\|_V+&\\
+L_1\|v_1-v_2\|_{L^4(D)}\|v_2-v_3\|_{L^4(D)}\|v_1\|_V+&\\
+L_1\|v_2-v_3\|^2_{L^4(D)}\|v_1\|_V+J\|v_2-v_3\|\|v_2-v_3\|_V, 
\end{array}
\end{equation}
for $v_1,v_2,v_3\in V.$ For $d<3$, from inequality (\ref{lady}) and from (\ref{inqsln}) we have
$$\begin{array}{rl}
\langle A(v_1,v_2)-A(v_3,v_3),v_2-v_3)\rangle\leq-\frac{1}{4}\|v_2-v_3\|_V^2+&\\
+\|v_2-v_3\|^2\|v_1\|^2+2J^2\|v_2-v_3\|^2+(1+\frac{L_1^4}{2})\|v_2-v_1\|_V^2+&\\
+\frac{1}{2}\|v_1-v_2\|^2_V\|v_1\|^2_V+
\end{array}
$$
thus ({\bf C4}) is satisfied with $c_1=\frac{1}{K_2},$ $c_2=(1+\frac{L_1^4}{2}),$ $c_3=\frac{1}{2K_2},$  $c_5=\frac{1}{4}$ and $c_4=2J^2$.\\
For $d=3$, from (\ref{inqsln}), Young's inequality and the following inequality (see p. 34 of \cite{MS}):\\
$$
\|u\|^4_{L^4(D)}\leq4\leq\|u\|_{L^2(D)}\|\nabla u\|^3_{L^2(D)}\ \ \ u\in V
$$
we get
$$
\begin{array}{rl}
\langle A(v_1,v_2)-A(v_3,v_3),v_2-v_3)\rangle\leq-\frac{1}{4}\|v_2-v_3\|_V^2+&\\
+L_1^43^3(\frac{1+2^8}{2^6})\|v_2-v_3\|^2\|v_1\|^4+2J^2\|v_2-v_3\|^2+(1+\frac{3L_1^4}{4})\|v_2-v_1\|_V^2+&\\
+\frac{1}{4}\|v_1-v_2\|^2_V\|v_1\|^4_V+\\
\end{array}
$$
thus ({\bf C4}) is satisfied with $c_1=\frac{1}{K_3}L_1^43^3(\frac{1+2^8}{2^6}),$ $c_2=1+\frac{3L_1^4}{4},$ $c_3=\frac{1}{4K_3},$  $c_5=\frac{1}{4}$ and $c_4=2J^2$.\\
Finally, ({\bf C5}) is satisfied with $\theta_2= 1+L,$ $p_3=p_4=0$ and $p_5=1,$    
and the claim follows from Theorem \ref{theomain}. 
\eop

\end{document}